\tikzset{>=stealth'}
\newcommand{\hes}{{\rm Hess\,}}
\newtheorem{theorem}{Theorem}[section]
\newtheorem*{theorem*}{Theorem}
\newtheorem{lemma}[theorem]{Lemma}
\newtheorem{proposition}[theorem]{Proposition}
\title{\large{\bf An antimaximum principle for periodic solutions of a forced oscillator}}
\author{}
\date{}
\begin{document}
	
	\maketitle
	
	\begin{center}
	{\bf Alain Albouy$^{1}$,\qquad Antonio J. Ure\~na$^{2}$}
	
	\bigskip
	$^{1}$ IMCCE, CNRS-UMR8028, Observatoire de Paris
	
        77, avenue Denfert-Rochereau, 75014 Paris, France
         
         Alain.Albouy@obspm.fr
         
         \bigskip
         
         $^{2}$ Departamento de Matematica Aplicada, Facultad de Ciencias
         
         Campus Universitario de Fuentenueva
         
         Universidad de Granada, 18071, Granada, Spain
         
        ajurena@ugr.es
        
        \bigskip
	\end{center}

{\bf Abstract.} Consider the equation of the linear oscillator $u''+u=h(\theta)$, where the forcing term $h:\mathbb R\to\mathbb R$ is $2\pi$-periodic and positive. We show that the existence of a periodic solution implies the existence of a positive solution. To this aim we establish connections between this problem and some separation questions of convex analysis.

\medskip

{\bf\em Key Words:} Antimaximum principle, forced linear oscillator, positive solutions, separation of convex sets.
\section{Introduction}
\label{sec:intro}
Consider the linear, second-order equation

\begin{equation}\label{eq1}
u''+\omega^2 u=h(\theta)\,,
\end{equation}
where $\omega>0$ is a positive constant, the prime symbol stands for differentiation with respect to the independent variable $\theta$, the function $h:\mathbb R\to\mathbb R$ is continuous and $2\pi$-periodic, and the twice continuously differentiable function $u=u(\theta)$ is the unknown. It models the simplest oscillations that one can think of: those of a forced harmonic spring in one spatial dimension. The well-known Fredholm alternative theorem (see, e.g., \cite[Lemma 1.1, p. 146]{Hal}) provides a precise answer to the question of existence and multiplicity of $2\pi$-periodic solutions. More precisely, if $\omega$ is not an integer then \eqref{eq1} has a unique $2\pi$-periodic solution, and if $\omega\in\mathbb Z$ then there are periodic solutions if and only if the nonresonance condition 
\begin{equation}\label{eq2}
\int_0^{2\pi}h(\theta)\cos\omega\theta\,d\theta=\int_0^{2\pi}h(\theta)\sin\omega\theta\,d\theta=0
\end{equation}
holds. Morever, in this situation all solutions are $2\pi$-periodic. Assuming now that $h=h(\theta)$ has a sign we consider the question: 
\begin{center}
{\em Does \eqref{eq2} imply the existence of solutions $u$ of \eqref{eq1} having the sign of} $h$?
\end{center}
Under the label of {\em antimaximum principles}, this problem has been studied in depth when $\omega\not\in\mathbb Z$ . For instance, it follows from \cite[Theorem 1.1]{Zhang} that the answer is positive for $0<\omega\leq 1/2$ and negative for  $1/2<\omega\not\in\mathbb Z$.
The main result of this note is the following:
\begin{theorem}\label{th}
{Assume that $\omega=1$. If $h=h(\theta)$ satisfies \eqref{eq2}, is nonnegative, and not identically zero, then \eqref{eq1} has a solution $u=u(\theta)$ with $u(\theta)>0$ for all $\theta\in\mathbb R$.}
\end{theorem}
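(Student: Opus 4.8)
The plan is to pass to the phase plane and recast positivity as a convex-geometric condition. Writing $z=u+iu'$, equation \eqref{eq1} with $\omega=1$ becomes $z'=-iz+ih$, so that
\begin{equation*}
z(\theta)=e^{-i\theta}\Bigl(z_0+i\int_0^\theta e^{is}h(s)\,ds\Bigr),\qquad z_0\in\mathbb C .
\end{equation*}
Setting $\Gamma(\theta):=i\int_0^\theta e^{is}h(s)\,ds$ and $P:=-z_0$, every solution is of the form $u(\theta)=\operatorname{Re}\bigl(e^{-i\theta}(\Gamma(\theta)-P)\bigr)$, and conversely each $P\in\mathbb C$ produces one. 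Here I use that, by \eqref{eq2}, $\int_0^{2\pi}e^{is}h(s)\,ds=0$, so $\Gamma(2\pi)=\Gamma(0)=0$ and $\Gamma$ is a \emph{closed} curve; combined with the fact (recalled in the excerpt) that under \eqref{eq2} all solutions are $2\pi$-periodic, this shows the theorem amounts to finding $P$ with $\operatorname{Re}\bigl(e^{-i\theta}(\Gamma(\theta)-P)\bigr)>0$ for every $\theta$.

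Next I would identify $\Gamma$ as the boundary of a convex body and compute its support function; this is where the hypothesis $h\ge0$ enters. For a direction $\alpha$ put $g_\alpha(\theta):=\operatorname{Re}\bigl(e^{-i\alpha}\Gamma(\theta)\bigr)$, the projection of $\Gamma$ onto the line $\mathbb R e^{i\alpha}$. Since $\Gamma'(\theta)=ie^{i\theta}h(\theta)$, one gets $g_\alpha'(\theta)=-h(\theta)\sin(\theta-\alpha)$, which, because $h\ge0$, is $\le0$ on $[\alpha,\alpha+\pi]$ and $\ge0$ on $[\alpha+\pi,\alpha+2\pi]$. Hence over a period $g_\alpha$ decreases and then increases, so it attains its maximum at $\theta=\alpha$. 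Consequently the supporting line of $\Omega:=\overline{\operatorname{conv}}\,\Gamma$ with outer normal $e^{i\alpha}$ touches $\Gamma$ exactly at $\Gamma(\alpha)$, and the support function is $h_\Omega(\alpha)=\operatorname{Re}\bigl(e^{-i\alpha}\Gamma(\alpha)\bigr)$. This is the heart of the matter: the sign of $h$ is precisely what makes $\Gamma(\alpha)$ the extreme point of $\Omega$ in the direction $e^{i\alpha}$.

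With the support function in hand the argument closes quickly. If $P\in\operatorname{int}\Omega$, then $P+\varepsilon e^{i\alpha}\in\Omega$ for small $\varepsilon>0$, whence $\operatorname{Re}(e^{-i\alpha}P)<h_\Omega(\alpha)=\operatorname{Re}(e^{-i\alpha}\Gamma(\alpha))$ for every $\alpha$; that is, $\operatorname{Re}\bigl(e^{-i\alpha}(\Gamma(\alpha)-P)\bigr)>0$ for all $\alpha$, which is exactly the positivity we seek. It remains only to check $\operatorname{int}\Omega\neq\emptyset$. If $\Omega$ lay in a line with unit normal $e^{i\beta}$, then $g_\beta\equiv\mathrm{const}$ would force $h(\theta)\sin(\theta-\beta)\equiv0$; but $h$ is continuous, nonnegative and not identically zero, hence strictly positive on some interval, on which $\sin(\theta-\beta)$ cannot vanish identically — a contradiction. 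Thus $\Omega$ has nonempty interior, and any interior point $P$ gives $z_0=-P$ and a strictly positive $2\pi$-periodic solution.

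The step I expect to be the main obstacle is the second one: seeing that $h\ge0$ is equivalent to convexity of $\Gamma$ and, more sharply, that it pins $\Gamma(\alpha)$ down as the support point in direction $e^{i\alpha}$. Phrasing this through the support function rather than through differential-geometric convexity is what makes the proof robust, since it tolerates the zeros of $h$ — where $\Gamma$ is merely stationary, not regular — so that no smoothness or simplicity of the curve needs to be verified separately.
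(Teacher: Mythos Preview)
Your proof is correct and takes a genuinely different, though dual, route from the paper's. The paper picks any solution $f_0$, forms the positively homogeneous extension $\rho(re^{i\theta})=rf_0(\theta)$, shows via a Hessian computation that $h\ge 0$ makes $\rho$ convex, and then invokes a separately proved sharpening of Hahn--Banach (Proposition~\ref{prop1}, established through two inductive lemmas) to obtain a \emph{strictly} supporting linear form $(x,y)\mapsto ax+by$, whence $u=f_0-a\cos\theta-b\sin\theta>0$. You work instead with the convex \emph{body} $\Omega=\overline{\operatorname{conv}}\,\Gamma$ in the phase plane and need only the elementary monotonicity of the projections $g_\alpha$ together with the standard characterization of interior points via the support function. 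The two pictures are tied together by the observation that your support function $h_\Omega(\alpha)=\int_0^\alpha\sin(\alpha-s)h(s)\,ds$ is itself a particular solution of \eqref{eq1}; thus the paper's $\rho$ (for that choice of $f_0$) is precisely the support function of your $\Omega$, and ``strictly supporting linear form for $\rho$'' translates verbatim into ``interior point of $\Omega$''. What your approach buys is economy: no auxiliary Hahn--Banach-type proposition, and the crucial strictness (nonempty interior) follows in one line from $h\not\equiv 0$. What the paper's approach buys is a reusable abstract statement about convex homogeneous functions, decoupled from this particular ODE. One cosmetic point: your phrase ``touches $\Gamma$ exactly at $\Gamma(\alpha)$'' overstates matters when $h$ vanishes on an interval, but you only use that $\max_\theta g_\alpha(\theta)=g_\alpha(\alpha)$, which is what your monotonicity argument actually proves.
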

Some remarks are in order:

\medskip

\noindent {\em (i)} Equivalently we may restate this theorem as follows: in the case $\omega=1$, if $0\not\equiv h\in C(\mathbb R/2\pi\mathbb Z,\mathbb R)$ is nonnegative and equation \eqref{eq1} has a periodic solution, then it has a  everywhere-positive solution.

\medskip

\noindent {\em (ii)} One may wonder whether Theorem \ref{th} remains true for $\omega=2,3,4...$ In Proposition \ref{sec4} we shall provide a counterexample showing that the answer is negative for $\omega\geq 3$. The case $\omega=2$ is not well understood by the authors.

\medskip

\noindent {\em (iii)} We point out that \eqref{eq1} can be solved in a number of elementary ways that are found in classical textbooks.  For instance, for $\omega=1$ the formula of variation of constants gives $$u(\theta)=\left(\alpha+\int_0^\theta h(\tau)\cos\tau\,d\tau\right)\sin\theta-\left(\beta+\int_0^\theta h(\tau)\sin\tau\,d\tau\right)\cos\theta$$ with $\alpha,\beta\in\mathbb R$. Alternatively one can use Fourier series: if  $h(\theta)=a_0+\sum_{n=2}^{+\infty}(a_n\cos n\theta+b_n\sin n\theta)$ then the solutions of \eqref{eq1} are given by
$$u(\theta)=a_0-\sum_{n=2}^{+\infty}\left(\frac{a_n}{n^2-1}\cos n\theta+\frac{b_n}{n^2-1}\sin n\theta\right)+\alpha\cos\theta+\beta\sin\theta,$$
with $\alpha,\beta\in\mathbb R$. Analyzing the sign of these expressions seems hard and so both approaches are apparently ill-suited to answer the question formulated above.

\medbreak

\noindent {\em (iv)} A more sophisticated study would involve the method of lower and upper solutions, for which we refer to the classical work \cite{deC-Hab}. Indeed, for nonnegative $h$ equation \eqref{eq1} has the upper solution $\beta(\theta)\equiv 0$ and the lower solution $\alpha(\theta)\equiv M$ with $M>0$ big. Notice however that they are in the reverse order. Even though a number of results of the lower and upper solutions literature consider the reverse order case (see, e.g., \cite{Zhang}), we are unaware of any that might apply to \eqref{eq1}.

\medbreak

Our approach to the problem will be quite different and, to the best of our knowledge, new. In broad terms, the key idea  will consist in relating the different periodic solutions of equation \eqref{eq1} with the supporting linear forms of a given convex, positively homogeneous map of degree $1$.

\section{Convexity and supporting linear forms}
The following terminology is usual. A function $\rho:\mathbb R^N\to\mathbb R$ is said to be positively homogeneous of degree $1$ provided that
$$\rho(\lambda z)=\lambda\rho(z),\qquad z\in\mathbb R^N,\ \lambda> 0.$$
Assume now that $\rho:\mathbb R^N\to\mathbb R$ is positively homogeneous of degree $1$ and satisfies $\rho(0)=0$. Given $e^*\in(\mathbb R^{N})^*$,
\begin{itemize}
	\item it is usually said that $e^*$ is a  {\em supporting linear form} for $\rho$ if
	$$\rho(z)\geq\langle z,e^*\rangle\text{ for every }z\in\mathbb R^N;$$
	\item we shall say that $e^*$  is a {\em strictly supporting linear form} for $\rho$ provided that
	\begin{equation}\label{euu1}
	\rho(z)>\langle z,e^*\rangle\text{ for every }z\in\mathbb R^N\backslash\{0\}.
	\end{equation}  
\end{itemize}

On the other hand, one easily checks that if $\rho:\mathbb R^N\to\mathbb R$ is positively homogeneous of degree $1$, then it is also convex if and only if 
$$\rho(z_1+z_2)\leq\rho(z_1)+\rho(z_2),\qquad z_1,z_2\in\mathbb R^N.$$
Since every convex function defined on (an open convex subset of) a finite-dimensional vector space is continuous (see e.g. \cite[Theorem 5.2.1]{florlev}), in this situation we recover the previous condition  $\rho(0)=0$. 

\medskip

The classical version of the Hahn-Banach theorem (see, e.g. \cite[p.1, Th\'eor\`eme I.1]{Bre}) states that every function $\rho:\mathbb R^N\to\mathbb R$ which is  positively homogeneous of degree $1$ and convex  admits supporting linear forms. The main result of this section is the following
\begin{proposition}\label{prop1}Let  $\rho:\mathbb R^N\to\mathbb R$ be positively homogeneous of degree $1$, convex and differentiable at every point of $\mathbb R^N\backslash\{0\}$. If in addition $\rho\not\in(\mathbb R^N)^*$, then it admits a strictly supporting linear form.
	\end{proposition}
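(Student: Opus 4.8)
The plan is to identify, geometrically, the full set of supporting linear forms and then to locate a strictly supporting one in its interior. Let $K\subseteq(\R^N)^*$ denote the set of all supporting linear forms for $\rho$, that is, those $e^*$ with $\rho(z)\ge\langle z,e^*\rangle$ for every $z\in\R^N$. First I would check that $K$ is a compact convex set: convexity and closedness are immediate, since $K$ is an intersection of closed half-spaces, while boundedness follows from $|\langle z,e^*\rangle|\le\max\{\rho(z),\rho(-z)\}$ applied to the vectors of a basis of $\R^N$. By the Hahn--Banach theorem quoted above, $K$ is nonempty.

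Next I would establish that $\rho(z)=\max_{e^*\in K}\langle z,e^*\rangle$ for every $z$, with the maximum attained. The inequality ``$\ge$'' is the very definition of $K$. For the reverse, fix $z_0\ne 0$ and use differentiability: Euler's identity for degree-one positively homogeneous functions gives $\langle z_0,\nabla\rho(z_0)\rangle=\rho(z_0)$, and the gradient inequality for convex functions combined with this identity shows $\nabla\rho(z_0)\in K$. Hence the gradient already attains the value $\rho(z_0)$, so the maximum equals $\rho(z_0)$ (the case $z_0=0$ being trivial). Read together with the fact that a convex function differentiable at a point has there a unique subgradient, the same computation yields the crucial uniqueness statement: for each $z_0\ne 0$ the linear functional $e^*\mapsto\langle z_0,e^*\rangle$ attains its maximum over $K$ at exactly one point, namely $\nabla\rho(z_0)$.

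With these facts in hand I would argue that $K$ is full-dimensional. Since $\rho\notin(\R^N)^*$, the set $K$ cannot be a singleton, for $K=\{e^*_0\}$ would force $\rho=\langle\,\cdot\,,e^*_0\rangle$. Suppose, for contradiction, that $K$ were contained in a proper affine subspace of $(\R^N)^*$. Then some nonzero $z_0$ would annihilate all differences of elements of $K$, making $e^*\mapsto\langle z_0,e^*\rangle$ constant on $K$; as $K$ has at least two points, its maximum over $K$ would be attained non-uniquely, contradicting the uniqueness just established. Hence $\mathrm{aff}(K)=(\R^N)^*$ and $K$ has nonempty interior. Finally, I would take any $e^*_0\in\mathrm{int}(K)$ and verify that it is strictly supporting: a ball $B(e^*_0,\varepsilon)\subseteq K$ gives, for every $z\ne 0$, $\rho(z)-\langle z,e^*_0\rangle=\max_{e^*\in K}\langle z,e^*-e^*_0\rangle\ge\varepsilon\|z\|>0$, which is exactly \eqref{euu1}.

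I expect the main obstacle to be the middle step: converting the analytic hypothesis of differentiability away from the origin into the geometric conclusion that maximizers over $K$ are unique, and hence that $K$ cannot be degenerate (lower-dimensional). Once differentiability is encoded as uniqueness of the exposed points of $K$, the remainder --- full-dimensionality and the interior-point estimate --- is elementary convex geometry.
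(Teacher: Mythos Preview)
Your argument is correct and takes a genuinely different route from the paper's. The paper proceeds in two inductive steps: first (Lemma~\ref{lem2}) it uses differentiability to show that $\rho(z)+\rho(-z)>0$ for all $z\ne 0$, and then (Lemma~\ref{lem3}, going back to Minkowski) it builds a strictly supporting form one coordinate at a time from this inequality alone. Your approach instead works dually: you recognize $K=\partial\rho(0)$, identify $\rho$ as the support function of $K$, and translate differentiability on $\R^N\setminus\{0\}$ into the statement that each linear functional $\langle z_0,\cdot\rangle$ has a unique maximizer on $K$; this forces $K$ to be full-dimensional, and any interior point does the job. Your route is shorter, avoids induction, and even yields the quantitative estimate $\rho(z)-\langle z,e_0^*\rangle\ge\varepsilon\|z\|$. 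The paper's decomposition, on the other hand, isolates Lemma~\ref{lem3} as a result of independent interest requiring only condition~\eqref{eq6} rather than differentiability, at the cost of a longer overall proof.
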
  
This variation of the Hahn-Banach theorem can be deduced from known results (see \cite[(9.12), p. 459]{klee}), but we shall give a hopefully simpler proof. Our approach will be divided in two steps. In the first one we shall see that under the assumptions of Proposition \ref{prop1} one has the inequality
	\begin{equation}\label{eq6}
	\rho(z)+\rho(-z)>0\text{ for every }z\in\mathbb R^N\backslash\{0\}.
\end{equation}
This is indeed a first step towards the proof of Proposition \ref{prop1} since condition \eqref{eq6} is clearly necessary for the existence of strictly supporting linear forms. 
\begin{lemma}\label{lem2}{Let $\rho:\mathbb R^N\to\mathbb R$ be positively homogeneous of degree $1$ and convex. If moreover $\rho\not\in(\mathbb R^N)^*$, then $\rho(z)+\rho(-z)>0$ for every $z\in\mathbb R^N\backslash\{0\}$ where $\rho$ is differentiable.
	
	}\begin{proof}Convexity gives
		$$0=\rho(0)\leq\frac{\rho(z)+\rho(-z)}{2},$$
		for every $z\in\mathbb R^N$. Thus, it will suffice to check that $\rho(z)+\rho(-z)\not=0$ for every $z\in\mathbb R^N\backslash\{0\}$ where $\rho$ is differentiable.	We use an inductive argument on the dimension $N$. 
		
		\medskip
		
		If $N=1$ the result is trivial: if $\rho:\mathbb R\to\mathbb R$ is positively homogeneous of degree $1$, the equality $\rho(-z_0)+\rho(z_0)=0$ for some $z_0\not=0$ implies that $\rho$ is linear.  
		
		\medskip	
		
		Assuming the statement true for some $N$, let  $\rho:\mathbb R^{N+1}\to\mathbb R$ be positively homogeneous of degree $1$, convex, and not linear, and suppose, by a contradiction argument, that $\rho(z_0)+\rho(-z_0)=0$ for some $z_0\in\mathbb R^{N+1}\backslash\{0\}$ where $\rho$ is differentiable. Choose some  hyperplane $H\subset\mathbb R^{N+1}$ with $z_0\in H$. By the inductive assumption the restriction $\rho\big|_H=e^*$ must be linear. Fix some vector $z_1\in\mathbb R^{N+1}\backslash H$ and let the function $\rho_1:H\to\mathbb R$ be defined by
		$$\rho_1(h):=\rho(z_1+h)-\langle e^*,h\rangle,\qquad h\in H.$$
		Then $\rho_1$ is convex. Moreover, denoting by $|\cdot|$ a fixed (arbitrary) norm in $\mathbb R^N$ one has
	\begin{multline*}
		\lim_{|h|\to\infty}\frac{\rho_1(h)}{|h|}=\lim_{|h|\to\infty}\left(\rho\left(\frac{z_1}{|h|}+\frac{h}{|h|}\right)-\left\langle e^*,\frac{h}{|h|}\right\rangle\right)=\\	=\lim_{|h|\to\infty}\left(\rho\left(\frac{z_1}{|h|}+\frac{h}{|h|}\right)-\rho\left(\frac{h}{|h|}\right)\right)=0,
		\end{multline*}
since  $\rho$ is continuous, thus uniformly continuous on compact sets, and $z_1/|h|\to 0$. Hence, we see that $\rho_1:H\to\mathbb R$ is convex and has sublinear growth at infinity, and we deduce that $\rho_1(h)\equiv c$ is constant. Then,
		$$\rho(h+z_1)=\langle e^*,h\rangle+c,\qquad h\in H,$$
		and, by homogeneity,
		$$\rho(h+\lambda z_1)=\langle e^*,h\rangle+c\lambda,\qquad h\in H,\ \lambda\geq 0.$$
		Similarly, there exists another constant $d\in\mathbb R$ such that $\rho(h+\lambda z_1)=\langle e^*,h\rangle+d\lambda$ for every $h\in H$ and $\lambda\leq 0$. In particular, taking $h=z_0\in H$ we see that $$\rho(z_0+\lambda z_1)=\begin{cases}\langle e^*,z_0\rangle+c\lambda&\text{if }\lambda\geq 0,\\
		\langle e^*,z_0\rangle+d\lambda&\text{if }\lambda<0.\end{cases}$$ Remembering that $\rho$ was assumed differentiable at $z_0$ we deduce that $c=d$. It means that $\rho$ is linear, a contradiction. The result follows.
	\end{proof}
	
\end{lemma}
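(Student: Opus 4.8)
The plan is to bypass the induction of the printed proof and argue instead through the support-function representation of $\rho$, which has the pleasant feature of being phrased directly in terms of the supporting linear forms that are the subject of this section. First I would record that, since $\rho$ is convex, positively homogeneous of degree $1$ and finite (hence continuous) on all of $\mathbb R^N$, it is the support function of the set $K:=\{e^*\in(\mathbb R^N)^*:\langle z,e^*\rangle\le\rho(z)\text{ for all }z\in\mathbb R^N\}$ of its supporting linear forms; that is, $\rho(z)=\max_{e^*\in K}\langle z,e^*\rangle$ for every $z\in\mathbb R^N$. Nonemptiness of $K$ is exactly the Hahn--Banach statement quoted above, closedness is clear, and boundedness follows from the finiteness of $\rho$ on the unit sphere, so $K$ is compact and convex. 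Under this dictionary the hypothesis $\rho\not\in(\mathbb R^N)^*$ translates precisely into the statement that $K$ is not a single point.

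Next I would translate the two hypotheses at the point $z_0$ into properties of $K$. On the one hand, since $\rho(-z_0)=\max_{e^*\in K}\langle-z_0,e^*\rangle=-\min_{e^*\in K}\langle z_0,e^*\rangle$, the quantity $\rho(z_0)+\rho(-z_0)$ equals $\max_{e^*\in K}\langle z_0,e^*\rangle-\min_{e^*\in K}\langle z_0,e^*\rangle$, the width of $K$ in the direction $z_0$; thus the equality $\rho(z_0)+\rho(-z_0)=0$ forces the linear functional $e^*\mapsto\langle z_0,e^*\rangle$ to be constant on $K$, so that every point of $K$ maximizes it. On the other hand, the subdifferential of a support function is its exposed face, $\partial\rho(z_0)=\{e^*\in K:\langle z_0,e^*\rangle=\rho(z_0)\}$, and a finite convex function on $\mathbb R^N$ is differentiable at a point exactly when its subdifferential there reduces to a single element. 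Hence differentiability of $\rho$ at $z_0$ means that $\langle z_0,\cdot\rangle$ attains its maximum on $K$ at a \emph{unique} point.

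Combining these two observations closes the argument: if $\rho(z_0)+\rho(-z_0)=0$ while $\rho$ is differentiable at $z_0\ne0$, then $\langle z_0,\cdot\rangle$ is at once constant on $K$ (hence maximized at every point of $K$) and possesses a unique maximizer, which forces $K$ to be a single point and therefore $\rho$ to be linear, contradicting $\rho\not\in(\mathbb R^N)^*$. The inequality $\rho(z)+\rho(-z)\ge0$ is of course immediate from $\rho(0)=0$ and convexity, exactly as noted in the statement, so only the strictness is at stake. I expect the genuine work to lie entirely in the convex-analytic bookkeeping of the first two paragraphs, namely establishing the representation $\rho=\max_{e^*\in K}\langle\cdot,e^*\rangle$ with $K$ compact, and justifying both the identity $\partial\rho(z_0)=\{e^*\in K:\langle z_0,e^*\rangle=\rho(z_0)\}$ and the differentiability-versus-singleton criterion; once these standard facts are in place the geometric contradiction is essentially immediate.
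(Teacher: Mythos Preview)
Your argument is correct and takes a genuinely different route from the paper's. The paper proceeds by induction on the dimension $N$: assuming $\rho(z_0)+\rho(-z_0)=0$ at a differentiability point, it first forces $\rho$ to be linear on any hyperplane $H$ through $z_0$ (via the inductive hypothesis), then shows by a sublinear-growth argument that $\rho$ is affine on each translate $H+\lambda z_1$, and finally uses differentiability at $z_0$ to match the two half-space slopes and conclude that $\rho$ is globally linear. Your approach instead identifies $\rho$ with the support function of the compact convex set $K$ of its supporting forms and reads both hypotheses off geometrically: zero width of $K$ in the direction $z_0$ makes the exposed face equal to all of $K$, while differentiability forces that face to be a singleton, so $K$ is a point and $\rho$ is linear.

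The trade-off is one of self-containment versus transparency. The paper's induction is entirely elementary and needs only continuity of $\rho$ and the fact that a convex function of sublinear growth is constant; your proof is shorter and more conceptual but leans on three nontrivial pieces of convex-analytic infrastructure (the support-function representation of a sublinear function, the identification $\partial\rho(z_0)=\{e^*\in K:\langle z_0,e^*\rangle=\rho(z_0)\}$, and the equivalence between differentiability and a singleton subdifferential). These are all standard, so the bookkeeping you flag is routine, but in a paper aiming for a self-contained treatment the inductive argument has the advantage of not importing that machinery. Conversely, your argument makes the role of differentiability much more transparent: it is precisely what prevents $K$ from having positive extent in a direction orthogonal to $z_0$ while still having zero width along $z_0$.
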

To complete the proof of Proposition \ref{prop1} we shall check the following lemma. It goes back to Minkowski \cite[XXV, pp. 151-153]{Min}, but we include a short proof for completeness. 

\begin{lemma}\label{lem3}{Let $\rho:\mathbb R^N\to\mathbb R$ be positively homogeneous of degree $1$, convex and satisfy \eqref{eq6}. Then it admits a strictly supporting linear form.
	}
	\begin{proof}We argue by induction on the dimension $N$. If $N=1$ then the result is trivial. Assuming that it holds for some $N$, let $\rho:\mathbb R^{N+1}\to\mathbb R$ satisfy the assumptions above. The identification $\mathbb R^{N+1}\equiv\mathbb R^N\times\mathbb R$ allows us to write the points of $\mathbb R^{N+1}$ as pairs $(z,\lambda)$ where $z\in\mathbb R^N$ and $\lambda\in\mathbb R$. Since the function $\rho_0:\mathbb R^N\to\mathbb R$ given by $z\mapsto\rho(z,0)$ is positively homogeneous of degree $1$, convex, and satisfies \eqref{eq6}, then it admits a strictly supporting linear form $e^*_0\in(\mathbb R^N)^*$. Let $e^*\in(\mathbb R^{N+1})^*$ be defined by $$\langle e^*,(z,\lambda)\rangle:=\langle e^*_0,z\rangle+\alpha\lambda\text{ for every }(z,\lambda)\in\mathbb R^{N+1},$$ where $\alpha$ is a real number, to be fixed later. Homogeneity means that $e^*$ is a strictly supporting linear form for $\rho$ if and only if the inequality in \eqref{euu1} holds for every ordered pair of the form $(z,\pm 1)$ with $z\in\mathbb R^N$. Equivalently, if and only if
		\begin{equation}\label{eu6}
			\rho_1(z):=\langle e_0^*,z\rangle-\rho(z,-1)<\alpha<\rho_2(z):=\rho(z,1)-\langle e_0^*,z\rangle,\qquad z\in\mathbb R^N.
		\end{equation}
		All three functions $\rho,\rho_1,\rho_2$ are continuous. On the other hand homogeneity gives
		$$\limsup_{|z|\to+\infty}\frac{\rho_1(z)}{|z|}=\max_{|z|=1}\Big(\langle e_0^*,z\rangle-\rho_0(z)\Big)<0,$$
		and therefore, $\rho_1(z)\to-\infty$ as $|z|\to\infty$. Similarly, $\rho_2(z)\to+\infty$ as $|z|\to\infty$ and we see that $\rho_1$ attains its global maximum and $\rho_2$ attains its global minimum. Therefore, it will be possible to find a constant $\alpha$ satisfying \eqref{eu6} if and only if $\rho_1(z_1)<\rho_2(z_2)$ for every $z_1,z_2\in\mathbb R^N$. Equivalently,
		$$\rho_2(z_2)-\rho_1(z_1)=\rho(z_2,1)+\rho(z_1,-1)-\langle e_0^*,z_1+z_2\rangle>0,\qquad z_1,z_2\in\mathbb R^N.$$
		
	When $z_2=-z_1$ we obtain the inequality $\rho(z_2,1)>\rho(-z_2,-1)$, which holds true  by assumption \eqref{eq6}.  When $z_2\not=-z_1$ one can use the inductive assumption to obtain $\rho(z_2,1)+\rho(z_1,-1)\geq\rho(z_1+z_2,0)>\langle e_0^*,z_1+z_2\rangle$. It proves the result.
	\end{proof}
\end{lemma}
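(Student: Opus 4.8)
The plan is to argue by induction on the dimension $N$, at each step reducing the extension of a strictly supporting form across one additional coordinate to the solvability of a single scalar inequality. The base case $N=1$ is immediate: a positively homogeneous degree-$1$ function $\rho:\mathbb R\to\mathbb R$ is determined by the two values $a=\rho(1)$ and $b=\rho(-1)$, and hypothesis \eqref{eq6} reads simply $a+b>0$; then any constant $c$ with $-b<c<a$ defines a strictly supporting linear form $z\mapsto cz$, and such a $c$ exists precisely because $a+b>0$.

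For the inductive step I would identify $\mathbb R^{N+1}\equiv\mathbb R^N\times\mathbb R$ and write points as $(z,\lambda)$. The restriction $\rho_0(z):=\rho(z,0)$ is again positively homogeneous, convex, and satisfies \eqref{eq6} on $\mathbb R^N$, so the inductive hypothesis furnishes a strictly supporting linear form $e_0^*\in(\mathbb R^N)^*$. I would then search for an extension of the prescribed shape $\langle e^*,(z,\lambda)\rangle=\langle e_0^*,z\rangle+\alpha\lambda$, the real parameter $\alpha$ being the only remaining freedom. By homogeneity it suffices to verify the strict inequality \eqref{euu1} on the two slices $\lambda=\pm1$ (the slice $\lambda=0$ is already handled by $e_0^*$). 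Writing these two families of inequalities out, they are equivalent to a two-sided band $\rho_1(z)<\alpha<\rho_2(z)$ for all $z\in\mathbb R^N$, where $\rho_1(z)=\langle e_0^*,z\rangle-\rho(z,-1)$ and $\rho_2(z)=\rho(z,1)-\langle e_0^*,z\rangle$.

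Thus the task becomes: find a constant lying strictly above $\sup_z\rho_1$ and strictly below $\inf_z\rho_2$. I would first record that both functions are coercive in the correct direction. Dividing by $|z|$ and letting $|z|\to\infty$, the leading behaviour of $\rho_1$ is governed by $\max_{|u|=1}\bigl(\langle e_0^*,u\rangle-\rho_0(u)\bigr)$, which is strictly negative precisely because $e_0^*$ strictly supports $\rho_0$ on the compact unit sphere; hence $\rho_1(z)\to-\infty$ and, symmetrically, $\rho_2(z)\to+\infty$. Consequently $\rho_1$ attains a global maximum and $\rho_2$ a global minimum, so an admissible $\alpha$ exists if and only if $\rho_1(z_1)<\rho_2(z_2)$ for all $z_1,z_2$, that is,
\[
\rho(z_2,1)+\rho(z_1,-1)-\langle e_0^*,z_1+z_2\rangle>0 .
\]

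The heart of the argument, and the step I expect to be the main obstacle, is verifying this last inequality, since it is here that the two hypotheses of the lemma interact and where a naive use of subadditivity degenerates. I would split into two cases according to whether $z_1+z_2$ vanishes. If $z_2\neq-z_1$, subadditivity of $\rho$ gives $\rho(z_2,1)+\rho(z_1,-1)\geq\rho(z_1+z_2,0)=\rho_0(z_1+z_2)$, and since $z_1+z_2\neq0$ the strict support of $e_0^*$ upgrades this to the strict inequality required. The borderline case $z_2=-z_1$ is exactly where subadditivity yields only $\rho\geq\rho(0)=0$, a non-strict bound that is useless; here instead the inequality reduces to $\rho(w)+\rho(-w)>0$ with $w=(z_2,1)\neq0$, which is precisely hypothesis \eqref{eq6}. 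This dichotomy, subadditivity away from the antipodal direction and \eqref{eq6} exactly on it, is what makes the induction close, and isolating it correctly is the delicate point.
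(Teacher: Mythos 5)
Your proof is correct and follows essentially the same route as the paper's: the same induction on $N$, the same one-parameter extension $\langle e^*,(z,\lambda)\rangle=\langle e_0^*,z\rangle+\alpha\lambda$ of the strictly supporting form obtained on the slice $\lambda=0$, the same coercivity argument showing $\rho_1\to-\infty$ and $\rho_2\to+\infty$, and the same dichotomy resolving the key inequality --- hypothesis \eqref{eq6} when $z_2=-z_1$, subadditivity plus the strict support of $e_0^*$ when $z_2\neq-z_1$. Your only addition is an explicit verification of the base case $N=1$ (correctly reduced to choosing $-\rho(-1)<c<\rho(1)$), which the paper dismisses as trivial.
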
     

Proposition \ref{prop1} will be needed in the next section in the case $N=2$.  

   \section{Periodic functions of one variable  vs. positively homogeneous functions on the plane}  
Let the linear spaces $\mathcal H$, $\mathcal F$ be defined by:
$$\mathcal H:=\big\{\rho\in C(\mathbb R^2,\mathbb R)\cap C^2(\mathbb R^2\backslash\{0\},\mathbb R):\rho\text{ is positively homogeneous of degree }1\big\},$$
$$\mathcal F:=C^2(\mathbb R/2\pi\mathbb Z,\mathbb R).$$

One can construct a linear map $\Phi:\mathcal H\to\mathcal F$ in the following way. Given $\rho\in\mathcal H$ we set $\Phi[\rho]:=u$, where $$u(\theta)=\rho(\cos\theta,\sin\theta),\qquad \theta\in\mathbb R.$$ It is clear that $\Phi$ is bijective: for each $u\in\mathcal F$ there exists an unique $\rho\in\mathcal H$ such that $\Phi[\rho]=u$. In fact, in polar coordinates
$\rho=\Phi^{-1}[u]$ is explicitly given by
\begin{equation}\label{eq3}
	\begin{cases}
	\rho(re^{i\theta})=ru(\theta), &r\geq 0,\ \theta\in\mathbb R/2\pi\mathbb Z,\\	
\rho(0)=0,	
\end{cases}
\end{equation}
where we use complex notation and write $re^{i\theta}:=(r\cos\theta, r\sin\theta)$. In the result below we point out another property of this correspondence.
\begin{lemma}\label{lem1}{Let $\rho\in\mathcal H$ be given and set $u:=\Phi[\rho]$. Then $\rho$ is convex if and only if $u''(\theta)+u(\theta)\geq 0$ for every $\theta\in\mathbb R$.}
\begin{proof}
	Differentiation in \eqref{eq3} shows that the matrix $M$ of $\hes\rho(re^{i\theta})$ with respect to the orthonormal basis $\{e^{i\theta},ie^{i\theta}\}$ is given by
$$M=\begin{pmatrix}0&0\\
	0&(u''(\theta)+u(\theta))/r
	\end{pmatrix},$$
for every $r>0$ and $\theta\in\mathbb R$. Therefore, the inequality $u''(\theta)+u(\theta)\geq 0$ for every $\theta\in\mathbb R$ is equivalent to $\hes\rho(z)$ being positive semidefinite for every $z\in\mathbb R^2\backslash\{0\}$. The result follows.
\end{proof}
\end{lemma}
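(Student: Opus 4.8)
The plan is to establish the claimed equivalence by directly computing the Hessian of $\rho$ away from the origin and relating its positive semidefiniteness to the convexity of $\rho$. The key observation is that for a positively homogeneous function of degree $1$, convexity on $\mathbb R^2\backslash\{0\}$ (which is where $\rho$ is $C^2$) can be tested via the second-order condition $\hes\rho(z)\succeq 0$, and since $\rho$ is continuous everywhere and radially linear, convexity on the punctured plane will upgrade to convexity on all of $\mathbb R^2$. So the whole matter reduces to a pointwise computation of the Hessian in a convenient basis.

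First I would fix a point $z=re^{i\theta}$ with $r>0$ and use polar coordinates, working in the orthonormal frame $\{e^{i\theta},ie^{i\theta}\}$ (the radial and angular directions). The homogeneity relation $\rho(re^{i\theta})=ru(\theta)$ makes the two partial derivatives transparent: differentiating in $r$ gives $u(\theta)$ in the radial direction, while differentiating in $\theta$ brings down $u'(\theta)$, and $u''(\theta)$ appears at second order. The crucial feature is that because $\rho$ is homogeneous of degree $1$, it is linear along every ray, so the second derivative in the pure radial direction vanishes; this forces the top-left entry of $M$ to be $0$. The mixed and angular second derivatives, after accounting for the $1/r$ factors coming from the polar Laplacian-type computation and the curvature of the circle $r=\text{const}$, collapse to give exactly the stated matrix $M=\begin{pmatrix}0&0\\0&(u''+u)/r\end{pmatrix}$. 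I would carry out this differentiation carefully but not belabor it, since it is the routine core.

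Once $M$ is in hand, the equivalence is immediate. A $2\times 2$ symmetric matrix of this diagonal form is positive semidefinite precisely when its single nonzero entry $(u''(\theta)+u(\theta))/r$ is nonnegative, and since $r>0$ this happens exactly when $u''(\theta)+u(\theta)\geq 0$. Ranging over all $\theta$ and all $r>0$, we get that $\hes\rho(z)\succeq 0$ for every $z\in\mathbb R^2\backslash\{0\}$ if and only if $u''(\theta)+u(\theta)\geq 0$ for all $\theta$. Finally, I would invoke the standard fact that a $C^2$ function is convex iff its Hessian is positive semidefinite throughout a convex domain; combined with continuity of $\rho$ at the origin and radial linearity, this yields convexity of $\rho$ on all of $\mathbb R^2$, completing the proof.

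The main obstacle I anticipate is not conceptual but computational: getting the Hessian entries exactly right in the rotating orthonormal frame $\{e^{i\theta},ie^{i\theta}\}$, since the frame itself depends on $\theta$ and one must be careful to distinguish the intrinsic Hessian (as a bilinear form on $\mathbb R^2$) from the naive matrix of second partials in polar coordinates. Keeping track of the factors of $1/r$ and confirming that the radial--angular cross term genuinely vanishes is where a sign or normalization error could slip in. A secondary point worth a sentence is justifying that convexity on the punctured plane extends to the origin, but this follows easily from continuity together with the fact that the origin is a degenerate point where $\rho(0)=0$ and every ray is a line of linearity.
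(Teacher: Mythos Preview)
Your proposal is correct and follows essentially the same approach as the paper: compute the Hessian of $\rho$ at $re^{i\theta}$ in the orthonormal frame $\{e^{i\theta},ie^{i\theta}\}$, obtain the matrix $M=\begin{pmatrix}0&0\\0&(u''(\theta)+u(\theta))/r\end{pmatrix}$, and read off the equivalence from positive semidefiniteness. Your extra remarks about extending convexity across the origin and about tracking the $1/r$ factors are reasonable elaborations, but the core argument matches the paper's proof exactly.
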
	 
\begin{proof}
	[Proof of Theorem \ref{th}] Choose any solution $f_0=f_0(\theta)$ of \eqref{eq1} and let the positively homogeneous of degree $1$ function $\rho:\mathbb R^2\to\mathbb R$ be defined as in \eqref{eq3}. By Lemma \ref{lem1}, $\rho$ is convex. Since it is further not linear (because $h$ is not identically zero), Proposition \ref{prop1} states the existence of a supporting linear form $e^*\in(\mathbb R^2)^*$, $(x,y)\mapsto ax+by$. Thus, $u(\theta):=f_0(\theta)-a\cos\theta-b\sin\theta$ satisfies all the required assumptions and completes the proof. 
\end{proof}
\section{Higher eigenvalues}\label{sec4}
The main result of this section is the following
\begin{proposition}\label{prop2}{Let the integer $\omega\geq 3$ be fixed. Then, there exists a $C^\infty$ function $h:\mathbb R/2\pi\mathbb Z\to\mathbb R$ satisfying \eqref{eq2}, with $h(\theta)>0$ for all $\theta\in\mathbb R$ and such that \eqref{eq1} does not have positive solutions.}
\begin{proof}
	We set
	\begin{equation}\label{h}
	h(\theta):=u_*''(\theta)+\omega^2u_*(\theta),\qquad \theta\in\mathbb R,
	\end{equation}
	where $u_*:\mathbb R/2\pi\mathbb Z\to\mathbb R$ will be a suitable $C^\infty$ function satisfying
\begin{itemize}
	\item[{\em (i)}] $u_*''(\theta)+\omega^2 u_*(\theta)>0$\qquad  for all $\theta$,	
	\item[{\em (ii)}] $u_*(0),u_*(3\pi/\omega)<0$.
\end{itemize}
In this way our function $h$ will be $2\pi$-periodic and positive. Moreover, condition \eqref{eq2} follows directly from the definition of $h$. In order to check that  \eqref{eq1} does not have positive solutions we notice that every solution $u=u(\theta)$ has the form
$$u(\theta)=u_*(\theta)+\alpha\sin(\omega\theta)+\beta\cos(\omega\theta),\qquad \theta\in\mathbb R,$$
for some constants $\alpha,\beta\in\mathbb R$. The function $\theta\mapsto\alpha\sin(\omega\theta)+\beta\cos(\omega\theta)$ has a different sign at $\theta=0$ and $\theta=3\pi/\omega$ and therefore {\em (ii)} is not compatible with the existence of solutions $u$ with $u(0),u(3\pi/\omega)>0$.

\medskip

Using a convolution argument with a suitable smooth mollifier $\varphi$ we see that it actually suffices to construct the function  $u_*:\mathbb R/2\pi\mathbb Z\to\mathbb R$ of class $C^1$ and piecewise $C^2$. In this setting, condition {\em (i)} is assumed to hold only on the open intervals where $u_*$ is twice differentiable. The relevant property here is that  equation \eqref{eq1} is linear and therefore if $u=u(\theta)$ is a solution for a given forcing term $h$, then $u\ast\varphi$ is a solution for the regularized forcing term $h\ast\varphi$.

\medskip

With all this in mind we construct $u_*:\mathbb R/2\pi\mathbb Z\to\mathbb R$ as follows:

$$u_*(\theta):=\begin{cases}
	1/2-\cos\omega\theta &\text{if }|\theta|\leq\pi/\omega,\\
	3/2&\text{if }\pi/\omega\leq|\theta|\leq2\pi/\omega,\\
1/2+\cos\omega\theta &\text{if }2\pi/\omega\leq|\theta|\leq\pi,
\end{cases}$$
and extended by periodicity. It is $C^1$ and piecewise $C^2$; moreover, it satisfies {\em (i)-(ii)}, thus concluding the proof. 
\end{proof}
\end{proposition}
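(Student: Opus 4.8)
The plan is to reverse-engineer the forcing term from a conveniently chosen particular solution. Rather than prescribing $h$ and then analysing the sign of the solutions of \eqref{eq1}, I would first build a $2\pi$-periodic function $u_*$ and then simply set $h:=u_*''+\omega^2u_*$. Two features come for free from this choice: the function $h$ is automatically $2\pi$-periodic, and it automatically satisfies the nonresonance condition \eqref{eq2}. Indeed, integrating by parts twice and using periodicity one gets $\int_0^{2\pi}(u_*''+\omega^2u_*)\cos\omega\theta\,d\theta=0$, and likewise with $\sin\omega\theta$, so the orthogonality relations hold whatever $u_*$ is. It therefore remains only to arrange that $h$ be positive and that no solution of \eqref{eq1} be everywhere positive.

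The positivity of $h$ translates into the requirement (i) that $u_*''+\omega^2u_*>0$ everywhere. For the obstruction to positive solutions I would exploit the resonant structure: since $\omega\geq 3$ is an integer, the general solution of \eqref{eq1} is $u=u_*+g$ with $g(\theta)=\alpha\sin\omega\theta+\beta\cos\omega\theta$, and every such $g$ satisfies $g(0)=\beta=-g(3\pi/\omega)$ because $\sin 3\pi=0$ and $\cos 3\pi=-1$. Consequently $u(0)+u(3\pi/\omega)=u_*(0)+u_*(3\pi/\omega)$ for every choice of $\alpha,\beta$. Thus, if I can guarantee (ii) $u_*(0)+u_*(3\pi/\omega)<0$ (for which it suffices that both values be negative), then no solution can be positive at both $\theta=0$ and $\theta=3\pi/\omega$, and hence none is everywhere positive. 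Here the hypothesis $\omega\geq 3$ is exactly what ensures $0<3\pi/\omega\leq\pi$, so that the two points are distinct and lie inside a single period.

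With this reduction in hand, the concrete task is to produce $u_*$ meeting (i) and (ii). I would assemble it from pieces on which the operator $u\mapsto u''+\omega^2u$ is easy to control: on an arc of the form $u_*=\tfrac{1}{2}\mp\cos\omega\theta$ one computes $u_*''+\omega^2u_*=\omega^2/2>0$, and on a constant plateau $u_*\equiv 3/2$ one gets $3\omega^2/2>0$. Gluing a downward cosine arc near $\theta=0$ (so that $u_*(0)=-1/2$), then a plateau, then an upward cosine arc that dips to $u_*(3\pi/\omega)=-1/2$, and matching values and first derivatives at the junctions, yields an even, $2\pi$-periodic function of class $C^1$ and piecewise $C^2$ satisfying (i) on each smooth piece together with (ii). The point requiring care is the $C^1$ matching at the breakpoints $\pi/\omega$ and $2\pi/\omega$ (and at $\pm\pi$ for the periodic extension), which works precisely because the relevant derivatives $\pm\omega\sin\omega\theta$ vanish there.

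The last step is to upgrade this $u_*$ to a genuinely smooth example, and this is where I expect the only real care to be needed. Because $u_*$ is merely piecewise $C^2$, the associated $h$ is bounded below by $\omega^2/2>0$ but is only piecewise continuous at the breakpoints. I would remove this defect by convolving with a nonnegative smooth mollifier $\varphi$ of small support and unit mass: since the constant-coefficient operator commutes with convolution, $u_*\ast\varphi$ solves \eqref{eq1} with the smooth, still $2\pi$-periodic and still strictly positive forcing term $h\ast\varphi$, and the nonresonance condition \eqref{eq2} persists. Finally, by continuity of $u_*$ and the smallness of the support of $\varphi$, the averaged values $(u_*\ast\varphi)(0)$ and $(u_*\ast\varphi)(3\pi/\omega)$ remain negative, so (ii) survives the smoothing and the obstruction argument goes through verbatim for the smooth example.
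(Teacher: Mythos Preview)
Your proposal is correct and follows essentially the same route as the paper: define $h$ from a hand-built $u_*$, use the two sample points $0$ and $3\pi/\omega$ to obstruct positivity, assemble $u_*$ from the same cosine/plateau pieces, and smooth by convolution. Your phrasing of the obstruction via the invariant sum $u(0)+u(3\pi/\omega)=u_*(0)+u_*(3\pi/\omega)$ is a slightly cleaner variant of the paper's ``opposite signs'' remark, and you add the explicit check that condition (ii) survives mollification, which the paper leaves implicit.
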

 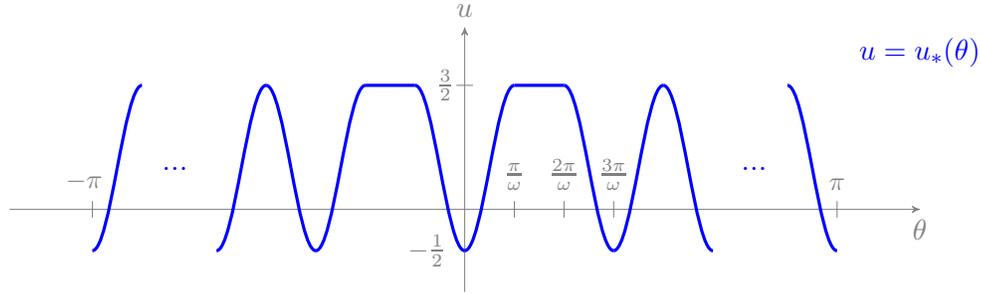
\begin{figure}
	\begin{center}
	\begin{tikzpicture}[scale=1.1]
			\draw[help lines,->] (-5.5,0) -- (5.5,0) node[below] {\small $\theta$};
			\draw[help lines,->] (0,-1) -- (0,2.2) node[above] {\small $u$};
		\draw[blue, very thick](0,-0.5) cos (0.3,0.5);
		\draw[blue, very thick](0.3,0.5) sin (0.6,1.5);
	\draw[blue, very thick](0.6,1.5)--(1.2,1.5);
			\draw[blue, very thick](1.2,1.5) cos (1.5,0.5);
			\draw[blue, very thick](1.5,0.5) sin (1.8,-0.5);
			\draw[blue, very thick](1.8,-0.5) cos (2.1,0.5);
			\draw[blue, very thick](2.1,0.5) sin (2.4,1.5);
				\draw[blue, very thick](2.4,1.5) cos (2.7,0.5);
			\draw[blue, very thick](2.7,0.5) sin (3,-0.5);
				\draw[blue, very thick](3.9,1.5) cos (4.2,0.5);
			\draw[blue, very thick](4.2,0.5) sin (4.5,-0.5);
		\draw[blue, very thick](0,-0.5) cos (-0.3,0.5);
	\draw[blue, very thick](-0.3,0.5) sin (-0.6,1.5);
	\draw[blue, very thick](-0.6,1.5)--(-1.2,1.5);
	\draw[blue, very thick](-1.2,1.5) cos (-1.5,0.5);
	\draw[blue, very thick](-1.5,0.5) sin (-1.8,-0.5);
	\draw[blue, very thick](-1.8,-0.5) cos (-2.1,0.5);
	\draw[blue, very thick](-2.1,0.5) sin (-2.4,1.5);
	\draw[blue, very thick](-2.4,1.5) cos (-2.7,0.5);
	\draw[blue, very thick](-2.7,0.5) sin (-3,-0.5);
		\draw[blue, very thick](-3.9,1.5) cos (-4.2,0.5);
	\draw[blue, very thick](-4.2,0.5) sin (-4.5,-0.5);
		\node[left] at(-0.1,-0.5){\color{gray}\footnotesize $-\frac{1}{2}$};	
		\node[left] at(-0,1.5){\color{gray}\footnotesize $\frac{3}{2}$};	
		\draw[help lines](-0.1,1.5)--(0.1,1.5);
			\draw[help lines](4.5,-0.1)--(4.5,0.1);
				\draw[help lines](-4.5,-0.1)--(-4.5,0.1);
					\node[above] at(4.5,0.1){\color{gray}\footnotesize $\pi$};
					\node[above] at(-4.6,0.1){\color{gray}\footnotesize $-\pi$};
					\node at(3.5,0.5){\color{blue} $...$};	
					\node at(-3.5,0.5){\color{blue} $...$};	
						\node at(5.5,1.9){\color{blue}\small $u=u_*(\theta)$};
							\draw[help lines](0.6,-0.1)--(0.6,0.1);	
								\draw[help lines](1.2,-0.1)--(1.2,0.1);
									\draw[help lines](1.8,-0.1)--(1.8,0.1);	
									\node[above] at(0.6,0.1){\color{gray}\footnotesize $\frac{\pi}{\omega}$};\node[above] at(1.2,0.1){\color{gray}\footnotesize $\frac{2\pi}{\omega}$};			\node[above] at(1.8,0.1){\color{gray}\footnotesize $\frac{3\pi}{\omega}$};	
	\end{tikzpicture}
	
	\end{center}
	\caption{The graph of $u_*$ for $\omega$ odd.}
\end{figure}

We conclude with some final remarks:
\begin{itemize}
\item Concerning the possibility of avoiding subharmonic resonances we notice that the function $u_*$ constructed above satisfies $u_*(-\theta)=u_*(\theta)$ for every $\theta\in\mathbb R$.  In the case $\omega=3$ one further has that $u_*(\pi/2+\theta)=u_*(\pi/2-\theta)$ for every $\theta\in\mathbb R$. Assuming that the mollifier $\varphi$ is chosen even, these symmetries are inherited by $u_*\circ\varphi$, and then by $h:=(u_*\circ\varphi)''+u_*\circ\varphi$, and we deduce that $\int_0^{2\pi}h(\theta)\cos\theta d\theta=\int_0^{2\pi}h(\theta)\sin\theta d\theta=0$. We do not know whether it is possible to construct an improved example for the case $\omega=3$ satisfying also that $\int_0^{2\pi}h(\theta)\cos 2\theta\, d\theta=\int_0^{2\pi}h(\theta)\sin 2\theta\, d\theta=0.$
\item One can check that the trigonometric polynomial $u_*(\theta)=1- 2\cos(2\theta) -\cos(4\theta)$ also satisfies assumptions {\em (i)-(ii)} above in the case $\omega=3$. Thus, for $\omega=3$ the function $h$ defined as in \eqref{h} lies under the framework of Proposition \ref{prop2}.
\item We do not know whether Theorem \ref{th} (or Proposition \ref{prop2}) still holds for $\omega=2$. 
\end{itemize}

\end{document}